\newtheorem{theorem}{Theorem}[section]
\newtheorem{assumption}[theorem]{Assumption}
\newtheorem{claim}[theorem]{Claim}
\newtheorem{definition}[theorem]{Definition}
\newtheorem{remark}[theorem]{Remark}
\newenvironment{proof}[1][Proof]{\noindent\textit{#1.} }{\hfill \rule{0.5em}{0.5em}}
\newcommand{\R}{\mathbb{R}}
\renewcommand{\d}{{\rm d}}
\renewcommand{\eqref}[1]{(\ref{#1})}
\begin{document}

\title{\textbf{Population invasion with bistable dynamics and adaptive evolution: the evolutionary rescue}}
\author{
\textsc{Matthieu Alfaro}\\
{\small \textit{IMAG, Universit\'e de
Montpellier,}} \\
{\small \textit{CC051, 34095 Montpellier
Cedex 5, France.}}\\
{\small \textit{email: matthieu.alfaro@umontpellier.fr}}\\
\textsc{Arnaud Ducrot}\\
{\small \textit{Univ. Bordeaux, IMB, UMR 5251, F-33400 Talence, France}} \\
		{\small \textit{CNRS, IMB, UMR 5251, F-33400 Talence, France.}}\\
		{\small \textit{email: arnaud.ducrot@u-bordeaux.fr}} 
}
\maketitle

\begin{abstract} We consider the system of reaction-diffusion equations proposed in \cite{Kanarek-Webb} as a population dynamics model. The first equation 
stands for the population density and models the ecological effects, namely dispersion and growth with a {\it Allee effect} (bistable nonlinearity). The second one stands for the Allee threshold, seen as a trait mean, and accounts for evolutionary effects. Precisely, the Allee threshold is submitted to three main effects: dispersion (mirroring ecology), asymmetrical gene flow and selection. The strength of the latter depends  on the population density and is thus coupling ecology and evolution. Our main result is to mathematically prove {\it evolutionary rescue}: any small initial population, that would become extinct in the sole ecological context, will persist and spread thanks to evolutionary factors.

\vspace{0.2in}\noindent \textbf{Key words}. Reaction-diffusion system, Allee effect, long time behaviour, energy method, evolutionary rescue.

\vspace{0.1in}\noindent \textbf{2000 Mathematical Subject Classification}. 35K45, 92B05, 92D15.
\end{abstract}

%\tableofcontents

\section{Introduction}\label{s:intro}

In this work we consider the following reaction-diffusion system
\begin{equation}\label{pb}
\begin{cases}
\displaystyle \frac{\partial u}{\partial t}=\frac{\partial^2 u}{\partial x^2}+u\left(u-a^2\right)(1-u),&\;t>0,\;x\in\R,\vspace{5pt}\\
\displaystyle \frac{\partial a}{\partial t}=\frac{\partial^2 a}{\partial x^2}+2\frac{\partial a}{\partial x}\frac{\partial \ln(u)}{\partial x}-\varepsilon (1-u)a,&\;t>0,\;x\in\R,
\end{cases}
\end{equation}
where $\varepsilon>0$ is a given parameter. The above system is supplemented with an initial  data
\begin{equation}\label{pb1}
\begin{cases}
u(0,x)=u_0(x)& \text{ with $u_0\in C(\R)$, $0\leq u_0\leq 1$ and $u_0\not\equiv 0$,}\\
a(0,x)=a_0& \text{ with $a_0\in (0,1)$}.
\end{cases}
\end{equation} 
This system of equations was proposed by Kanarek and Webb in \cite{Kanarek-Webb} to analyze the effect of environmental adaptive evolution on a species persistence or invasion.

In the above system, $u=u(t,x)$ denotes the density of an invasive species at time $t\geq 0$ and spatial location $x\in\R$. The first term in the right-hand side of the first equation stands for spatial diffusion, whereas the second one stands for the growth of the population which is assumed to exhibit an {\it Allee effect} with threshold $a^2$ (the growth per capita is negative when the population density is below this threshold).  Typically $u_0$, the introduced amount of population, is small and spatially localized, say compactly supported. 

In \eqref{pb}, the Allee threshold $a^2$ is assumed to be a spatio-temporal varying parameter. It is considered as a mean fitness evolution trait. Its evolution is ruled by the second equation in \eqref{pb}. The first term in the right-hand side mimics the diffusion of the population while the second term describes the {\it gene flow} due to the population gradient. Thus, these two terms take into account the joint influence of the motion and the position of individuals on the mean trait value $a$. We refer the reader to the works of Pease et al. \cite{Pease}, Kirkpatrick and Barton \cite{Kirkpatrick}, Garc\'{i}a-Ramos and Kirkpatrick \cite{Garcia}  for modelling details on the convection gene flow term. The last term in the right-hand side denotes the selection gradient with genetic variance parameter $\varepsilon>0$, that is typically small, and couples ecology and evolution.  We refer to \cite{Kanarek-Webb} for more details and explanations on the modelling issue.

When the genetic variance parameter $\varepsilon$ is null then $a(t,x)\equiv a_0$ is no longer a dynamical parameter, and the model reduces to the single bistable equation
\begin{equation*}\label{bistable}
\displaystyle \frac{\partial u}{\partial t}=\frac{\partial^2 u}{\partial x^2}+u\left(u-a_0^2\right)(1-u),
\end{equation*}
with initial data $u(0,x)=u_0(x)$. For such a problem, spatial invasion strongly depends upon both the initial data $u_0$ and the Allee threshold $a_0^2$.
For instance if $0\leq u_0\leq a_0^2$ is compactly supported then the solution $u=u(t,x)$ converges to $0$ as $t\to+\infty$, uniformly for $x\in\R$. Hence a localized small amount of introduced population leads to extinction and the population invasion fails. Even if $\max_{x\in\R}u_0(x)>a_0^2$ then population invasion may fail, in particular if the set $\{x\in\R:\;u_0(x)>a_0^2\}$ is somehow too small. We refer to the works of Fife and McLeod \cite{Fife-Leod}, Zlat\v{o}s \cite{Zlatos}, Du and Matano \cite{Du-Matano}, Muratov and Zhong \cite{Muratov-Zhong} for precise results on the so-called sharp threshold condition for bistable, combustion type equations and also for more general scalar reaction-diffusion equations. See also the work of Pol\'{a}\v{c}ick \cite{Polacik} for related results for non-autonomous equations.
This threshold phenomenon is in sharp contrast with Fisher-KPP dynamics, e.g. $f(u)=u(1-u)$, for which any small amount of population implies successful invasion. This is referred as to the
{\it Hair Trigger Effect}. For more general monostable dynamics, e.g. $f(u)=u^{p}(1-u)$ with $p>1$, we refer to Aronson and Weinberger \cite{Aronson-Weinberger}: the Hair Trigger Effect --- which is related to the {\it Fujita blow-up phenomenon} \cite{Fujita}, \cite{Alfaro}--- holds if and only if $p\leq 3$ (in dimension one).

Hence, the threshold phenomenon mentioned above holds true for system \eqref{pb} without mutation, that is in the extreme case $\varepsilon=0$. However based on numerical simulations, Kanarek and Webb \cite{Kanarek-Webb} show that some solutions of system \eqref{pb}, that would go extinct for $\varepsilon=0$, exhibit successful invasion as soon as $\varepsilon>0$. In other words, according to  \cite{Kanarek-Webb}, adaptive evolution may save species from the brink of extinction and may enable successful invasions. This phenomenon is referred to as {\it Evolutionary rescue}.
The aim of this note is to rigorously prove this statement in the context of the model \eqref{pb}--\eqref{pb1}.   

\section{Main results}\label{s:results}

Throughout this note, a solution $(u,a)=(u(t,x),a(t,x))$ for the initial value problem \eqref{pb}--\eqref{pb1} is understood in the classical sense, which is more precisely stated in the following definition. 

\begin{definition}\label{def-sol}
A function pair $(u,a)=(u(t,x),a(t,x))$ is said to be a solution of the initial value problem \eqref{pb}--\eqref{pb1} if it satisfies the following set of properties.
\begin{itemize}
\item[(i)] The functions $u$ and $a$ belong to $C^{1,2}\left((0,+\infty)\times \R\right)$ and $0<u(t,x)\leq 1$, $0<a(t,x)\leq a_0$, for all $t>0$, $x\in \R$.
\item[(ii)] The pair $(u,a)$ satisfies \eqref{pb} for all $t>0$ and $x\in\R$.
\item[(iii)] $\left(u(t,x),a(t,x)\right)\to \left(u_0(x),a_0\right)$, locally uniformly for $x\in\R$, as $t\to 0^+$.
\end{itemize}
\end{definition}

Our first result is concerned with the existence of a solution for \eqref{pb}--\eqref{pb1}.

\begin{theorem}[Existence of a solution]\label{theo-existence}
Let $(u_0,a_0)\in C(\R)\times (0,1)$ be a given initial data as in \eqref{pb1}.
Then  \eqref{pb} supplemented with the initial data $(u_0,a_0)$ admits, at least, a solution $(u,a)=\left(u(t,x),a(t,x)\right)$ in the sense of Definition \ref{def-sol}, that moreover satisfies
\begin{equation*}
\sup_{x\in\R}\left|a(t,x)-a_0\right|=O\left(t\right),\text{ as $t\to 0^+$}.
\end{equation*}
\end{theorem}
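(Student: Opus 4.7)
My plan is an approximation scheme combined with a change of variables that removes the singular coupling $\partial_x \ln u$.

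First, I would approximate the initial datum by $u_0^n \in C(\R)$ with $1/n \leq u_0^n \leq 1$ and $u_0^n \to u_0$ locally uniformly (e.g.\ by mollifying $\max(u_0, 1/n)$). For strictly positive smooth data $(u_0^n, a_0)$ the singular coefficient is bounded on a short time interval, and a Picard iteration in a ball of $C^{1+\alpha,(2+\alpha)/2}$ produces a local classical solution $(u^n, a^n)$. The constants $0, 1$ and $0, a_0$ are sub/super\-solutions of the first and second equations respectively, so the comparison principle together with the strong maximum principle yields the uniform bounds $0 < u^n \leq 1$ and $0 < a^n \leq a_0$, hence global existence.

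The key device is to introduce $v^n := u^n a^n$: expanding $(u^n a^n)_{xx}$ cancels the singular term $2(u^n_x/u^n) a^n_x$ exactly, and a direct computation gives the non-singular equation
\begin{equation*}
v^n_t = v^n_{xx} + (1-u^n)\, v^n \bigl( u^n - (a^n)^2 - \varepsilon \bigr).
\end{equation*}
Thus $u^n$ and $v^n$ both satisfy semilinear parabolic equations with uniformly bounded right-hand sides, so interior parabolic Schauder estimates provide uniform $C^{1+\alpha,(2+\alpha)/2}$ bounds on compact subsets of $(0,\infty) \times \R$. Arzel\`a--Ascoli then yields a subsequence with $(u^n, v^n) \to (u, v)$ in $C^{1,2}_{\mathrm{loc}}$. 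Since $u_0 \not\equiv 0$, the strong maximum principle applied to the limit equation for $u$ gives $u > 0$ throughout $(0,\infty) \times \R$; setting $a := v/u$ is then classical, and reversing the substitution shows that $(u, a)$ solves the original system. Continuity at $t = 0^+$ for $u$ follows from the uniform $L^\infty$ bound on $\partial_t u^n$ that the first equation directly provides.

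For the closing estimate, the function $\psi := a_0 - a \geq 0$ satisfies
\begin{equation*}
\psi_t - \psi_{xx} - 2 \frac{u_x}{u}\, \psi_x = \varepsilon(1-u)\, a \leq \varepsilon a_0, \qquad \psi(0,\cdot) = 0,
\end{equation*}
and comparison with the spatially constant barrier $\xi(t) := \varepsilon a_0 t$ (which satisfies the same operator with right-hand side $\varepsilon a_0$) delivers $\psi \leq \varepsilon a_0 t$, i.e.\ $\sup_x |a(t,x) - a_0| = O(t)$. The main obstacle throughout is that $\partial_x \ln u^n$ is not controlled uniformly in $n$ where $u_0$ vanishes; the substitution $v = ua$ is designed precisely to bypass this, converting the singular coupled system into a pair of non-degenerate parabolic equations to which the standard machinery applies.
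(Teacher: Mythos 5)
Your construction follows the paper's own strategy almost verbatim: regularise from below by $u_0^n\ge 1/n$, observe that $v^n:=u^na^n$ satisfies the non\-singular equation $v^n_t=v^n_{xx}+(1-u^n)v^n\bigl(u^n-(a^n)^2-\varepsilon\bigr)$ (this is exactly the paper's auxiliary function and its $G_n$), get compactness and pass to the limit. The genuine gap is in your closing step. You apply the comparison principle to $\psi:=a_0-a$ for the \emph{limit} function, imposing $\psi(0,\cdot)=0$. But the limit $a=v/u$ is only constructed on $(0,+\infty)\times\R$; that $a(t,\cdot)\to a_0$ as $t\to0^+$ is precisely the assertion to be proved, and it is the main difficulty of the whole theorem (the drift $2\partial_x\ln u$ is not controlled up to $t=0$ where $u_0$ vanishes, so no comparison principle for the limit problem on $[0,T]\times\R$ is available there). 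As written, the argument assumes what it must prove. The repair is to run your spatially constant barriers at the level of the approximations: for each fixed $n$ the coefficient $2\partial_x\ln u^n$ is bounded on $[0,T]\times\R$ (since $u_0^n\ge 1/n$ is smooth with bounded derivatives), so comparison legitimately gives $a_0(1-\varepsilon t)\le a^n\le a_0$ \emph{uniformly in $n$}, and this two-sided bound passes to the limit, yielding both $a(t,\cdot)\to a_0$ and the $O(t)$ rate. This is the role played in the paper by the heat-kernel sandwich $a_0U_ne^{-Mt}\le v^n\le a_0U_ne^{Mt}$, $U_ne^{-Nt}\le u^n\le U_ne^{Nt}$, giving $a_0e^{-(M+N)t}\le a^n\le a_0e^{(M+N)t}$; your barrier, once applied to $a^n$ rather than to $a$, is an equally valid (and arguably cleaner) variant.

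Two smaller points. First, continuity of $u$ at $t=0$ does not follow from ``the uniform $L^\infty$ bound on $\partial_t u^n$ that the first equation directly provides'': the equation expresses $\partial_t u^n$ through $\partial_x^2u^n$, which is \emph{not} bounded uniformly in $n$ near $t=0$ because $u_0$ is merely continuous. Use instead the Duhamel/comparison bound $e^{-Ct}\left(\Gamma(t,\cdot)\ast u_0^n\right)\le u^n(t,\cdot)\le e^{Ct}\left(\Gamma(t,\cdot)\ast u_0^n\right)$ with $C$ depending only on the $L^\infty$ bounds, as in the paper, and let $n\to+\infty$. Second, Definition \ref{def-sol}(i) requires $a>0$ for all $t>0$, not only for $t$ small; this follows, e.g., from the strong maximum principle applied to the $v$-equation once $v(t_0,\cdot)>0$ for some small $t_0$, or directly from the uniform lower bound on $a^n$. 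With these repairs your proof is correct and essentially coincides with the paper's.
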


The main difficulty in proving the above theorem consists in handling the singular convection term in the second equation in \eqref{pb}. Indeed, while the function $u$ becomes immediately positive, it is rather intricate to handle the term $\partial_x\ln(u)$, especially for small times when $u_0$ vanishes at some places. The above result is proved using a regularisation procedure and a suitable limiting argument to recover the initial data. As it will be clear from the proof given below, this procedure only allows to handle constant initial data $a(0,x)\equiv a_0$. The case of non-constant initial data $a(0,x)$ is much more delicate and is not considered here. One may also note that the above result provides the existence of a solution but not the uniqueness, that is also an open problem especially when $u_0$ vanishes at some locations.

Our next result deals with the asymptotic behaviour as $t\to+\infty$ of the solutions of the Cauchy problem \eqref{pb}--\eqref{pb1}.

\begin{theorem}[Evolutionary rescue result]\label{theo-asymp} Let $(u_0,a_0)\in C(\R)\times (0,1)$ be a given initial data as in \eqref{pb1}.
Let $(u,a)=\left(u(t,x),a(t,x)\right)$ be a solution of \eqref{pb}--\eqref{pb1}. Then the population density $u(t,x)$ enjoys the following weak persistence property 
\begin{equation*}
\limsup_{t\to+\infty}\sup_{x\in\R} u(t,x)=1.
\end{equation*}
If we furthermore assume that $a_0^2<\frac{1}{2}$, then the population exhibits a total spatial invasion, in the sense that
\begin{equation*}
\lim_{t\to+\infty} u(t,x)=1\text{ locally uniformly for $x\in\R$}.
\end{equation*}
\end{theorem}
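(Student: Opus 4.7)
The overall strategy is to reduce the long-time analysis to a limiting monostable problem, by exploiting that the selection term in the $a$-equation forces the Allee threshold $a^2$ to vanish whenever $u$ remains bounded away from $1$.

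For the weak persistence statement, the plan is to argue by contradiction: suppose that $\mu:=\limsup_{t\to+\infty}\sup_{x\in\R}u(t,x)<1$, so that $1-u(t,x)\geq \delta:=(1-\mu)/2>0$ for all $t\geq T_0$ large enough. First, applying the maximum principle to the $a$-equation at a spatial maximum (where $a_x=0$ and $a_{xx}\leq 0$), one obtains $a_t\leq -\varepsilon\delta\, a$ there, and a standard parabolic comparison then yields
\begin{equation*}
\sup_{x\in\R}a(t,x)\leq a_0\,e^{-\varepsilon\delta(t-T_0)}\longrightarrow 0\quad\text{as $t\to+\infty$.}
\end{equation*}
Next, select $(t_n,x_n)$ with $t_n\to+\infty$ and $u(t_n,x_n)\to\mu$, and consider the translates $(u_n(t,x),a_n(t,x)):=\bigl(u(t_n+t,x_n+x),a(t_n+t,x_n+x)\bigr)$. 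By interior parabolic regularity applied to $u$ and to the $a$-equation rewritten in divergence form as $u^2 a_t=(u^2 a_x)_x-\varepsilon u^2(1-u)a$ (which controls the singular factor $\partial_x\ln u$ in the original formulation), a subsequence converges locally uniformly to an entire solution $(\tilde u,\tilde a)$ of \eqref{pb} on $\R\times\R$, with $\tilde a\equiv 0$ by the preceding decay. Hence $\tilde u$ solves the monostable equation $\tilde u_t=\tilde u_{xx}+\tilde u^2(1-\tilde u)$ with $\tilde u\leq \mu$ and $\tilde u(0,0)=\mu$. Aronson--Weinberger's Hair Trigger Effect forces any nontrivial nonnegative solution of this equation to converge locally uniformly to $1$, contradicting $\tilde u\leq \mu<1$ unless $\mu=0$.

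The main obstacle is then to exclude the remaining case $\mu=0$, i.e., the uniform extinction $\sup_x u(t,x)\to 0$. The idea is to track the mass $I(t):=\int_\R u(t,x)\,\d x$, first assuming $u_0$ compactly supported and then extending by approximation. Its derivative satisfies
\begin{equation*}
I'(t)=\int_\R u^2(1-u)\,\d x-\int_\R a^2 u(1-u)\,\d x\geq -\Bigl(\sup_{x\in\R}a^2(t,x)\Bigr)I(t),
\end{equation*}
and since $\sup_x a^2(t,\cdot)$ is integrable on $(T_0,+\infty)$ by the exponential decay above, Gr\"onwall yields $I(t)\geq c>0$ for all $t$. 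On the other hand, $u$ is a supersolution of the perturbed monostable problem $w_t=w_{xx}+w^2(1-w)-a^2 w(1-w)$ whose time-perturbation $a^2 w(1-w)$ is likewise integrable in time; invoking the Hair Trigger Effect for this nearly-monostable equation, for instance through a sliding-bump subsolution or a refined $\omega$-limit argument along translates capturing the dispersing support, should contradict the simultaneous occurrence of $I(t)\geq c$ and $\sup_x u(t,x)\to 0$. I expect this last step to be the subtlest part of the proof, because one has to re-focus mass through the degenerate monostable nonlinearity $u^2(1-u)$ in the presence of the decaying drag $-a^2 u(1-u)$.

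Once weak persistence is established, the total invasion statement under the extra assumption $a_0^2<1/2$ follows from a bistable spreading argument. By the first part, choose $(t_n,x_n)$ with $u(t_n,x_n)\to 1$; passing to the limit on translates of $(u,a)$ and applying the strong maximum principle to the linear parabolic equation satisfied by $1-u$ shows the limit to be identically $1$. Consequently, for any $\eta>0$ and $R>0$ there exist $T>0$ and $y\in\R$ such that $u(T,x)\geq 1-\eta$ for all $x\in[y-R,y+R]$. Since $a(t,x)\leq a_0$ throughout, the function $u$ is a supersolution of the autonomous bistable equation $w_t=w_{xx}+w(w-a_0^2)(1-w)$, whose threshold $a_0^2<1/2$ allows sufficiently wide plateaux close to $1$ to propagate to $1$ throughout $\R$ (Fife--McLeod, Aronson--Weinberger). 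A comparison then yields $u(t,x)\to 1$ locally uniformly in $x$, as required.
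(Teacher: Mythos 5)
Your reduction to a limiting problem with vanishing threshold, and your final spreading step under $a_0^2<\tfrac12$ (wide plateau near $1$, comparison with the autonomous bistable equation with threshold $a_0^2$, Fife--McLeod), are sound and in fact coincide with the paper's arguments. The genuine gap is the case you yourself flag as "subtlest": excluding uniform extinction, i.e.\ the case $\mu=0$. Your two ingredients there do not yield a contradiction. The mass lower bound $I(t)=\int_\R u(t,x)\,\d x\geq c>0$ is perfectly compatible with $\sup_x u(t,\cdot)\to 0$ (the heat equation conserves mass while its sup decays like $t^{-1/2}$), so "mass persists" plus "sup decays" is not contradictory; and the invocation of a Hair Trigger Effect for the perturbed equation $w_t=w_{xx}+w^2(1-w)-a^2w(1-w)$ via "a sliding-bump subsolution or a refined $\omega$-limit argument" is precisely the statement that needs proof, not a tool you may quote. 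An $\omega$-limit argument along translates fails here because when $\sup_x u\to 0$ every locally uniform limit of translates is the trivial solution $0$, so no nontrivial entire solution is produced; and the quadratic (degenerate) monostable term gives no obvious subsolution mechanism to refocus mass that is spreading at the diffusive scale. Note also that even the exponential decay of $a$ only gives a time-dependent threshold $\theta(t)=\sup_x a^2(t,\cdot)$ with $\int^\infty\theta<\infty$; the Hair Trigger Effect for $u_t=u_{xx}+u(u-\theta(t))(1-u)$ under this integrability condition is exactly the nontrivial core lemma, and your proposal does not prove it.

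The paper closes precisely this gap by a quantitative energy argument (its Theorem on the Hair Trigger Effect for decreasing, integrable thresholds): one bounds $u$ from below by $\Theta$ times the heat solution $w$ with $\Theta=e^{-\int_0^\infty\theta}$, uses the self-similar asymptotics $w(t,\cdot)\approx \alpha\Gamma(t,\cdot)$, and shows that the cut-off profile $\Theta\chi(\cdot/\sqrt{t_0})w(t_0,\cdot)$ has negative energy on $(-\sqrt{t_0},\sqrt{t_0})$ for suitable large $t_0$: the gradient part is $O(t_0^{-3/2})$, the favourable cubic part is of order $-C/t_0$, and the threshold part is $\theta(t_0)O(t_0^{-1/2})$, which can be beaten along a sequence because $\liminf_{t\to\infty}\theta(t)\sqrt{t}=0$ follows from $\int^\infty\theta<\infty$. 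Comparison with the Dirichlet problem with frozen threshold $\theta(t_0)$ then prevents extinction, and the subsequent passage to the monostable limit (your $\mu>0$ argument, which is essentially the paper's second step) upgrades non-extinction to convergence to $1$. Without this (or an equivalent quantitative mechanism at the diffusive scale), your proof of the weak persistence statement is incomplete.
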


In the above theorem, the first part ensures that the population does not uniformly become extinct at large times. More interestingly when the initial Allee threshold is small, namely $a_0^2<\frac{1}{2}$, the population persists and successfully invades the whole space. Note that this assumption is hardly a restriction since in ecological problems, the Allee threshold is usually rather small and in particular typically smaller than $\frac{1}{2}$. This is the case in the simulations of \cite{Kanarek-Webb} where $a_0^2$ is set to $0.3$. Notice also that $a_0^2<\frac 12$ is equivalent to $\int _0 ^1 u(u-a_0^2)(1-u)\d u>0$, which is a natural assumption  in the seminal work of Fife and McLeod \cite{Fife-Leod} to allow an invasion  in a bistable situation.

To prove the above theorem, we first derive preliminary results for a single bistable reaction-diffusion equation with time varying threshold $\theta(t)$, namely   
\begin{equation*}
\frac{\partial u}{\partial t}=\frac{\partial^2 u}{\partial x^2}+u\left(u-\theta(t)\right)(1-u), \;\;t>0,\;x\in\R.
\end{equation*}
Using a suitable energy method, we roughly prove that when the Allee threshold $\theta(t)$ decay to zero rather fast as $t\to+\infty$ (see Assumption \ref{ASS-threshold} for a precise statement), then the above bistable equation enjoys the Hair Trigger Effect. This result is the goal of Section \ref{s:varying}, while both Theorem \ref{theo-existence} and Theorem \ref{theo-asymp} are proved in Section \ref{s:rescue}. 

\section{Bistable equations with decreasing threshold}\label{s:varying}

In this section we provide preliminary results that will be used to study \eqref{pb}. We consider the bistable reaction-diffusion equation with time varying threshold
\begin{equation}\label{eq1}
\frac{\partial u}{\partial t}=\frac{\partial^2 u}{\partial x^2}+f\left(\theta(t),u\right),\;t>0,\;x\in\R.
\end{equation}
The function $f=f(\theta,u)$ reads as the following bistable nonlinearity with threshold $\theta\in (0,1)$
\begin{equation}\label{function-f}
f(\theta,u)=u(u-\theta)(1-u).
\end{equation}
Here we consider that the threshold $\theta$ depends on time, $\theta=\theta(t)$, and decreases to $0$ sufficiently fast as $t\to+\infty$. Our precise set of assumptions reads as follows.

\begin{assumption}[Decreasing threshold]\label{ASS-threshold}
The function $\theta:[0,+\infty)\mapsto (0,1)$ is continuous, decreasing and satisfies
\begin{equation*}
\int_0^{+\infty} \theta(s)\d s<+\infty.
\end{equation*}
\end{assumption}

Under the above assumption, if $u_0\in C(\R)$ is a given function such that $0\leq u_0\leq 1$, $u_0\not\equiv 0$ and $u_0\not\equiv 1$, then \eqref{eq1} supplemented with the initial data $u(0,x)=u_0(x)$ admits a unique solution $u=u(t,x)$ that satisfies $0<u(t,x)<1$ for all $t>0$ and $x\in\R$. The main result of this section is concerned with its asymptotic behaviour as $t\to+\infty$.   

Note that bistable reaction-diffusion equations with time-varying threshold have been considered by Pol\'{a}\v{c}ick in \cite{Polacik}. In this work, the author deals with varying threshold that stays uniformly away from $0$ and $1$ and proves that such a problem exhibits a sharp threshold behaviour as in the case of a fixed threshold.
Here our problem is very different since the threshold function $\theta(t)$ decays to zero. In contrast with the sharp threshold effect proved by Pol\'{a}\v{c}ick in the aforementioned paper, we prove that the Hair Trigger Effect holds for \eqref{eq1} under Assumption \ref{ASS-threshold}. This reads as follows.

\begin{theorem}[Hair Trigger Effect]\label{theorem1}
Under Assumption \ref{ASS-threshold}, for each $u_0\in C(\R)$ with $0\leq u_0\leq 1$ and $u_0\not\equiv 0$, the solution $u=u(t,x)$ of \eqref{eq1} with initial data $u_0$ satisfies
\begin{equation*}
\lim_{t\to+\infty} u(t,x)=1\; \text{ locally uniformly for $x\in\R$}.
\end{equation*}
\end{theorem}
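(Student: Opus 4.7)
The proof breaks into three stages.

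\emph{Stage 1 (strict positivity).} Rewriting the equation as $u_t - u_{xx} - c(t,x) u = 0$ with the bounded coefficient $c(t,x) = (u-\theta(t))(1-u)$, the parabolic strong maximum principle yields $0 < u(t,x) < 1$ for all $t > 0$ and $x \in \R$. After a small time shift I may assume that $u_0 \in C^2(\R)$ with $0 < u_0 < 1$ pointwise.

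\emph{Stage 2 (energy method).} Following the hint of the introduction, I would introduce the Allen--Cahn-type functional
\[
\mathcal E(t) = \int_\R \left[\tfrac{1}{2} u_x^2 - F(\theta(t),u)\right] dx, \quad F(\theta,u) := \int_0^u f(\theta,s)\,ds,
\]
suitably localized via a spatial cutoff to ensure finiteness. A direct computation gives
\[
\frac{d\mathcal E}{dt} = -\int u_t^2\,dx - \theta'(t)\int \partial_\theta F(\theta,u)\,dx + \text{(boundary flux)}.
\]
The first term is non-positive; the second is non-positive too, since $\theta'(t) \le 0$ and $\partial_\theta F(\theta,u) = -\int_0^u s(1-s)\,ds \le 0$ on $[0,1]$. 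The assumption $\theta \in L^1(0,\infty)$ is exactly what is needed to absorb the boundary flux and any correction coming from the localization. Combining with standard parabolic $C^{2,\alpha}_{{\rm loc}}$ regularity and the convergence $\theta(t_n) \to 0$, I extract along any sequence $t_n \to \infty$ a subsequence along which $u(t_n + \cdot, \cdot)$ converges in $C^2_{{\rm loc}}$ to a bounded classical stationary solution $u_\infty$ of the limit monostable equation $-u'' = u^2(1-u)$, valued in $[0,1]$.

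\emph{Stage 3 (identification of the limit).} Phase-plane analysis shows that $0$ and $1$ are the only bounded stationary solutions of $-u'' = u^2(1-u)$ with values in $[0,1]$: the Hamiltonian $H(u,u') = \tfrac{1}{2}(u')^2 + \tfrac{u^3}{3} - \tfrac{u^4}{4}$ takes different values at $(0,0)$ and $(1,0)$, and the potential $V(u) = \tfrac{u^3}{3} - \tfrac{u^4}{4}$ is monotone on $[0,1]$, ruling out periodic orbits in this strip. To exclude $u_\infty \equiv 0$, I would argue that, since $\theta(t) \to 0$, for $t$ large the equation is effectively bistable with arbitrarily small threshold, and one can fit a stationary sub-solution of such a bistable equation below $u(T,\cdot)$ on an interval large enough to trigger Fife--McLeod type propagation. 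Connectedness of the $\omega$-limit set then upgrades the convergence to the locally uniform limit $u(t,\cdot) \to 1$.

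\emph{Main obstacle.} Stage 2 is the technical core. The Allen--Cahn energy is \emph{not} naturally finite on $\R$ once $u$ has invaded, since $F(\theta,1) = \tfrac{1}{12} - \tfrac{\theta}{6} > 0$, so one must localize and carefully control the boundary flux. The hypothesis $\theta \in L^1$ is tailor-made to make these errors integrable in time; without it the Allee-loss term $-\theta(t) u(1-u)$ could dominate the monostable self-reinforcement $u^2(1-u)$ on long time scales.
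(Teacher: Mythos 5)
There is a genuine gap, and it sits exactly where the whole difficulty of the theorem lies: excluding extinction. In Stage 2 you claim that, after localizing the Allen--Cahn energy, the hypothesis $\theta\in L^1(0,\infty)$ ``is exactly what is needed to absorb the boundary flux and any correction coming from the localization,'' and that you can then extract limits of $u(t_n+\cdot,\cdot)$ that are \emph{stationary} solutions of $-u''=u^2(1-u)$. Neither assertion is substantiated, and neither is plausible as stated: the localization and flux errors involve $u$ and $\partial_x u$ near the cutoff and have nothing to do with $\theta$, and to conclude that $\omega$-limits are stationary you would need the energy to be bounded below together with $\int_0^\infty\!\int u_t^2<\infty$, which fails here precisely for the reason you yourself point out ($F(\theta,1)>0$, so the energy of an invading front tends to $-\infty$). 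Without such dissipation control, limits along $t_n\to\infty$ are merely entire solutions of the limiting monostable equation, not stationary ones. Worse, your Stage 3 exclusion of $u_\infty\equiv 0$ is circular: to ``fit a stationary sub-solution of a bistable equation with small threshold below $u(T,\cdot)$ on an interval large enough'' you must already know that $u(T,\cdot)$ exceeds that threshold on a sufficiently wide interval at some large time $T$, and that is exactly the statement to be proved --- a priori, for compactly supported small data, $\sup_x u(t,x)$ decays like the heat kernel and could conceivably stay below the (also decaying) threshold forever.

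What is missing is the quantitative scaling argument that the paper builds its first step on. There one bounds $u$ from below by $\Theta\,w(t,x)$ with $\Theta=e^{-\int_0^\infty\theta}$ and $w$ the heat solution issued from $u_0$ (this is where $\theta\in L^1$ really enters), truncates this profile by $\chi(\cdot/\sqrt{t_0})$ on the diffusive interval $(-\sqrt{t_0},\sqrt{t_0})$, and computes the Dirichlet energy with the \emph{frozen} threshold $\theta(t_0)$: the gradient part is $O(t_0^{-3/2})$, the cubic term contributes $-C/t_0$ by self-similarity of $w$, and the Allee (quadratic) term is $\theta(t_0)\,O(t_0^{-1/2})$. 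Since $\theta$ is decreasing and integrable, $\liminf_{t\to\infty}\theta(t)\sqrt t=0$, so the energy is negative for a well-chosen large $t_0$; because $\theta$ is decreasing, $u(t_0+\cdot,\cdot)$ is a supersolution of the frozen-threshold Dirichlet problem, whose negative-energy solution cannot tend to $0$. This yields persistence, after which the paper (like your Stage 3, but with entire solutions of the time-dependent monostable equation and the Aronson--Weinberger hair trigger, rather than stationary limits and $\omega$-limit connectedness) concludes by placing a Fife--McLeod bistable bump $\frac{1+\theta_0}{2}\mathbf 1_{(-L,L)}$, $\theta_0<\frac12$, below $u$ at a large time. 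Your proposal, as written, never produces the lower bound that makes that last comparison legitimate, so the central step of the theorem remains unproved.
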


To prove this theorem we use an energy method for a similar problem posed on a bounded interval $(-R,R)$ supplemented with Dirichlet boundary conditions.
\medskip

\begin{proof}[A preliminary observation] Let $R>0$ and $\theta_0\in (0,1)$ be given.
Consider the Dirichlet problem 
\begin{equation}\label{eq-Dirichlet}
\begin{cases}
\displaystyle \frac{\partial v}{\partial t}=\frac{\partial^2 v}{\partial x^2}+f\left(\theta_0,v\right),&\;t>0,\;x\in (-R,R),\vspace{5pt}\\
\displaystyle v(t,\pm R)=0,&\;t>0,
\end{cases}
\end{equation}
supplemented with an initial data $v_0\in H^1_0(-R,R)$ such that $0\leq v_0\leq 1$.
We consider the energy functional $\mathcal E_{R,\theta_0}=\mathcal E_{R}^{\rm d}+\mathcal E_{R,\theta_0}^{\rm r}$ defined on $H_0^1(-R,R)$ by
$$
\mathcal E_R^{\rm d}(\varphi)=\frac{1}{2}\int_{-R}^R \vert \partial _x \varphi(x)\vert ^2 \d x,
$$
and
$$
\mathcal E_{R,\theta_0}^{\rm r}(\varphi)=-\int_{-R}^R F(\theta_0,\varphi(x))\d x,
$$
wherein we have set $F(\theta_0, v):=\int_0^v f(\theta_0,s)\d s=-\frac 1 4 v^4+\frac{1+\theta_0}{3}v^3-\frac \theta 2 v^2$. 

Let us observe that if $v=v(t,x)$ is a solution of \eqref{eq-Dirichlet} then one has
\begin{equation*}
\frac{\d}{\d t}\mathcal E_{R,\theta_0}\left(v(t,\cdot)\right)=-\int_{-R}^R |\partial_t v(t,x)|^2\d x\leq 0,\;\forall t>0.
\end{equation*}
Notice that $\mathcal E_{R,\theta_0}(0)=0$ and $\mathcal E_{R,\theta_0}(v(t,\cdot))\to \mathcal E_{R,\theta_0}(v_0)$ as $t\to 0$. Hence, if $\mathcal E_{R,\theta_0}(v_0)<0$ then the solution $v=v(t,x)$ of \eqref{eq-Dirichlet} with the initial data $v_0$ has to satisfy
$$
\limsup_{t\to + \infty}\sup_{x\in [-R,R]}v(t,x)>0.
$$
In other words, if $\mathcal E_{R,\theta_0}(v_0)<0$ with $v_0\in H_0^1(-R,R)$ then the corresponding solution $v$ does not go to extinction as $t\to+\infty$.
\end{proof}

\medskip
Keeping the above argument in mind, we now come back to \eqref{eq1} and to Theorem \ref{theorem1}.
\medskip

\begin{proof}[Proof of Theorem \ref{theorem1}]
Thanks to the comparison principle, it is sufficient to consider  a compactly supported  initial data $u_0\in C(\R)$ with $0\leq u_0\leq 1$ and $u_0\not\equiv 0$. Denote by $u=u(t,x)$ the solution of \eqref{eq1} starting from $u_0$. We split our proof into two steps. We first show that $u$ does not go extinct as $t\to+\infty$ by using energy functional method. Next we show that that $u$ does converge to $1$ as $t\to+\infty$, locally uniformly in space. 

\noindent{\bf First step: $u$ does not go extinct as $t\to+\infty$.} The aim of this first step is to show that
\begin{equation}\label{1st}
\exists x_0\in\R,\;\;\limsup_{t\to+\infty} u(t,x_0)>0.
\end{equation}
First observe that 
\begin{equation*}
f(\theta(t),u)\geq -\theta(t)u\; \text{ for all $u\in [0,1]$, $t\geq 0$}.
\end{equation*}
Then the parabolic comparison principle applies and yields
\begin{equation}\label{comparaison}
u(t,x)\geq e^{-\int_0^t \theta(s)\d s} w(t,x)\geq \Theta w(t,x),\;\;\forall t>0,\;x\in\R,
\end{equation}
with $\Theta:=e^{-\int_0^{+\infty} \theta(s)\d s}>0$ (see Assumption \ref{ASS-threshold}), and
$w(t,x)$  the solution of the heat equation starting from $u_0$, that is
\begin{equation*}
\frac{\partial w}{\partial t}=\frac{\partial^2 w}{\partial x^2},\;t>0,\;x\in\R;\quad w(0,\cdot)=u_0.
\end{equation*}
Introducing the heat kernel $\Gamma=\Gamma(t,x)$ defined by
\begin{equation*}
\Gamma(t,x)=\frac{1}{\sqrt{4\pi t}}e^{-\frac{x^2}{4t}},
\end{equation*}
the function $w$ re-writes as
\begin{equation*}
w(t,x)=(\Gamma(t,\cdot)*u_0)(x)=\int_{\R} \Gamma(t,x-y)u_0(y)\d y,\;t>0,\;x\in\R.
\end{equation*}
Now in order to prove that $u$ persists as $t\to+\infty$, we consider a smooth cut-off function $\chi:\R\to \R$ such that
\begin{equation*}
0\leq \chi\leq 1\; \text{ and }\; \chi(x)=\begin{cases} 1 & \text{ if }|x|\leq\frac{1}{2},\\ 0 & \text{ if }|x|\geq 1.\end{cases}
\end{equation*}
\begin{claim}\label{claim1}
There exists $t_0>0$ large enough such that the function 
\begin{equation*}\label{z_0}
v_0:=\Theta \chi\left(\frac{.}{\sqrt{t_0}}\right)w(t_0,\cdot)\in H^1_0\left(-\sqrt{t_0},\sqrt{t_0}\right)
\end{equation*}
satisfies $
\mathcal E_{\sqrt{t_0},\theta(t_0)}\left(v_0\right)<0$. 
\end{claim}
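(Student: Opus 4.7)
The plan is to estimate each term of $\mathcal{E}_{\sqrt{t_0},\theta(t_0)}(v_0)$ and to show that the invasion-favoring cubic piece of $-F(\theta(t_0),\cdot)$ asymptotically dominates the three destabilizing (gradient, quartic and $\theta(t_0)$-quadratic) contributions as $t_0\to+\infty$. Write $R=\sqrt{t_0}$ and recall that, by assumption at this stage of the proof, $u_0$ is compactly supported in some $[-K,K]$ with mass $M:=\int_\R u_0>0$. For $t_0\ge K^2$ and $|x|\le\sqrt{t_0}$ one has $(x-y)^2\le 4t_0$ uniformly in $y\in[-K,K]$, so the Gaussian representation of $w$ yields two-sided bounds
\begin{equation*}
\frac{c_1}{\sqrt{t_0}}\le w(t_0,x)\le \frac{c_2}{\sqrt{t_0}},\qquad |\partial_x w(t_0,x)|\le \frac{c_3}{t_0},
\end{equation*}
with constants depending only on $M$. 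Since $v_0=\Theta\chi(\cdot/R)w(t_0,\cdot)$ equals $\Theta w(t_0,\cdot)$ on $[-R/2,R/2]$ and is supported in $[-R,R]$, this produces $v_0\ge\Theta c_1/\sqrt{t_0}$ on $[-R/2,R/2]$, $\|v_0\|_\infty\le\Theta c_2/\sqrt{t_0}$, and $\|\partial_x v_0\|_\infty\le c_4/t_0$.

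Integrating these pointwise bounds over the interval of length $2\sqrt{t_0}$ gives
\begin{equation*}
\int_{-R}^R|\partial_x v_0|^2\, dx\le \frac{C}{t_0^{3/2}},\quad \int_{-R}^R v_0^4\, dx\le \frac{C}{t_0^{3/2}},\quad \int_{-R}^R v_0^2\, dx\le \frac{C}{\sqrt{t_0}},
\end{equation*}
while restricting to the central half-interval of length $\sqrt{t_0}$ gives the crucial lower bound
\begin{equation*}
\int_{-R}^R v_0^3\, dx\ge \sqrt{t_0}\left(\frac{\Theta c_1}{\sqrt{t_0}}\right)^3=\frac{\Theta^3 c_1^3}{t_0}.
\end{equation*}
Substituting into $\mathcal{E}_{R,\theta(t_0)}(v_0)=\tfrac{1}{2}\int|\partial_x v_0|^2+\int[\tfrac{1}{4}v_0^4-\tfrac{1+\theta(t_0)}{3}v_0^3+\tfrac{\theta(t_0)}{2}v_0^2]\, dx$ then yields, for $t_0$ large enough,
\begin{equation*}
\mathcal{E}_{R,\theta(t_0)}(v_0)\le \frac{C_1}{t_0^{3/2}}\;-\;\frac{\Theta^3 c_1^3}{3t_0}\;+\;\frac{C_2\,\theta(t_0)}{\sqrt{t_0}}.
\end{equation*}

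The final ingredient is Assumption \ref{ASS-threshold}: because $\theta$ is decreasing and integrable on $[0,+\infty)$, the elementary bound $\tfrac{t}{2}\theta(t)\le\int_{t/2}^t\theta(s)\,ds\to 0$ forces $t\theta(t)\to 0$, whence $\theta(t_0)\sqrt{t_0}\to 0$. Multiplying the previous display by $t_0$ and letting $t_0\to+\infty$ then gives
\begin{equation*}
\limsup_{t_0\to+\infty} t_0\,\mathcal{E}_{R,\theta(t_0)}(v_0)\le -\frac{\Theta^3 c_1^3}{3}<0,
\end{equation*}
so $\mathcal{E}_{\sqrt{t_0},\theta(t_0)}(v_0)<0$ for $t_0$ sufficiently large, which is the claim. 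The one delicate point is verifying that the negative cubic term is of strictly lower order in $1/t_0$ than each of the three positive competitors: for the Dirichlet and quartic terms this is automatic from the diffusive scaling $v_0=O(1/\sqrt{t_0})$ on a region of length $\sqrt{t_0}$, but for the $\theta(t_0)$-quadratic term it relies precisely on the integrability hypothesis built into Assumption \ref{ASS-threshold}.
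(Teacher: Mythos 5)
Your proof is correct, and it follows the same global strategy as the paper (all terms of the energy scale like powers of $1/t_0$, the favourable cubic term is of order exactly $1/t_0$, and after multiplying by $t_0$ the $\theta$-term is killed by Assumption \ref{ASS-threshold}), but the heat-equation estimates at its core are obtained differently. You exploit the reduction, already made at this stage, to a compactly supported $u_0$: for $|x|\le\sqrt{t_0}$ the Gaussian kernel gives two-sided pointwise bounds $c_1/\sqrt{t_0}\le w(t_0,x)\le c_2/\sqrt{t_0}$ and $|\partial_x w(t_0,x)|\le c_3/t_0$, from which all four integrals follow by crude $L^\infty$-times-length estimates; in particular you bound the quartic term separately as $O(t_0^{-3/2})$, where the paper absorbs it into the cubic one via $\|z(t,\cdot)\|_\infty\to0$. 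The paper instead works with Young's convolution inequality for the gradient term and with the self-similar $L^p$ asymptotics $\|w(t,\cdot)-\alpha\Gamma(t,\cdot)\|_{L^p}=o\left(t^{-\frac12(1-\frac1p)}\right)$ for the cubic lower bound, which needs only $u_0\in L^1$ and identifies the limiting constant, whereas your argument is more elementary (no self-similarity result is invoked) but genuinely uses compact support of $u_0$ — legitimate here thanks to the comparison-principle reduction. A further small difference: using the monotonicity of $\theta$ you obtain the full limit $\sqrt{t}\,\theta(t)\to0$, while the paper only extracts $\liminf_{t\to+\infty}\sqrt{t}\,\theta(t)=0$ from integrability, which suffices to pick one good $t_0$; both are valid under Assumption \ref{ASS-threshold}.
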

Equipped with the above claim, whose proof is postponed, we can conclude this first step. 
Indeed, note that since $\chi\leq 1$, \eqref{comparaison} ensures that $v_0\leq u(t_0,\cdot)$ on $[-R_0,R_0]$ with $R_0=\sqrt{t_0}$. Furthermore, since $t\mapsto \theta(t)$ is decreasing, one also has, for all $t\geq t_0$ and $x\in [-R_0,R_0]$,
$$
0=\frac{\partial u}{\partial t}-\frac{\partial^2 u}{\partial x^2}-f\left(\theta(t),u\right)
\leq \frac{\partial u}{\partial t}-\frac{\partial^2 u}{\partial x^2}-f\left(\theta(t_0),u\right).
$$
From the comparison principle, one gets
\begin{equation*}
u(t_0+t,x)\geq v(t,x),\;\forall t\geq 0,\;x\in (-R_0,R_0),
\end{equation*}
where $v$ is the solution of the Dirichlet problem \eqref{eq-Dirichlet}, with $\theta _0\leftarrow \theta(t_0)$ and $R\leftarrow R_0$, starting from $v_0$.
%\begin{equation*}
%\begin{split}
%&\frac{\partial v}{\partial t}=\frac{\partial^2 v}{\partial x^2}+f\left(\theta(t_0),v\right),\;t>0,\;x\in (-R_0,R_0),\\
%&v(t,\pm R_0)=0,\;t>0\text{ and }v(0,\cdot)=v_0(\cdot).
%\end{split}
%\end{equation*}
Since $\mathcal E_{R_0,\theta(t_0)}(v_0)<0$, we deduce from our preliminary observation that
\begin{equation*}
0<\limsup_{t\to+\infty} \sup_{x\in[-R_0,R_0]}v(t,x)\leq \limsup_{t\to+\infty} \sup_{x\in[-R_0,R_0]}u(t,x),
\end{equation*}
which concludes the proof of \eqref{1st}. To complete this first step, it remains to prove Claim \ref{claim1}.

\begin{proof}[Proof of Claim \ref{claim1}]
Set, for $t>0$ and $x\in\R$,
\begin{equation*}
z(t,x):=\Theta \chi\left(\frac{x}{\sqrt{t}}\right)w(t,x).
\end{equation*}

We first estimate $\mathcal E_{\sqrt t}^{\rm d}(z(t,\cdot))=\frac 12\left\|\partial_x z(t,\cdot)\right\|^2_{L^2(-\sqrt t,\sqrt t)}=\frac 12\left\|\partial_x z(t,\cdot)\right\|^2_{L^2(\R)}$.
To that aim, note that
\begin{equation*}
\frac{\partial z}{\partial x}(t,x)=\frac{\Theta}{\sqrt t}\chi'\left(\frac{x}{\sqrt t}\right) \left( \Gamma(t,\cdot)\ast u_0\right)(x)+ \Theta \chi\left(\frac{x}{\sqrt{t}}\right)\left(\frac{\partial \Gamma}{\partial x}(t,\cdot)\ast u_0\right)(x),
\end{equation*}
so that, for all $t>0$,
\begin{equation*}
\begin{split}
\left\|\frac{\partial z}{\partial x}(t,\cdot)\right\|_{L^2(\R)}\leq &\frac{\Theta \|\chi'\|_\infty}{\sqrt t}\|\Gamma(t,\cdot)\|_{L^2(\R)}\|u_0\|_{L^1(\R)}\\
&+ \Theta \left\|\frac{\partial \Gamma}{\partial x}(t,\cdot)\right\|_{L^2(\R)}\| u_0\|_{L^1(\R)},
\end{split}
\end{equation*}
and thus
\begin{equation*}
\left\|\frac{\partial z}{\partial x}(t,\cdot)\right\|_{L^2(\R)}=O\left(t^{-\frac{3}{4}}\right)\text{ as $t\to+\infty$}.
\end{equation*}
As a consequence we obtain
\begin{equation}\label{esti1}
\mathcal E_{\sqrt t}^{\rm d}(z(t,\cdot))=O\left(t^{-\frac{3}{2}}\right)\text{ as $t\to+\infty$}.
\end{equation}

We now estimate $\mathcal E^{\rm r}_{\sqrt t,\theta(t)}\left(z(t,\cdot)\right)$. This part of the energy is defined by
\begin{eqnarray*}
\mathcal E^{\rm r}_{\sqrt t,\theta(t)}\left(z(t,\cdot)\right)&=&\int_{-\sqrt t}^{\sqrt t}\left(\frac 1 4z^4(t,x)-\frac{1+\theta(t)}{3}z^3(t,x)+\frac{\theta(t)}{2}z^2(t,x)\right)
\d x\\
&\leq &\int_{-\sqrt t}^{\sqrt t}\left(\frac 1 4z^4(t,x)-\frac{1}{3}z^3(t,x)\right)\d x\\
&&+\frac{\theta(t)}{2}\Theta ^{2}\int_{-\sqrt t}^{\sqrt t}w^{2}(t,x)\d x.
\end{eqnarray*}
Now observe that $\|z(t,\cdot)\|_\infty=O\left(t^{-\frac{1}{2}}\right)$ as $t\to+\infty$. Hence, for $t>0$ large enough, we have
$$
\mathcal E^{\rm r}_{\sqrt t,\theta(t)}\left(z(t,\cdot)\right)\leq -\frac 1 6\int_{-\sqrt t}^{\sqrt t}z^3(t,x)\d x+\frac{\theta(t)}{2}\Theta ^{2}\int_{-\sqrt t}^{\sqrt t}w^{2}(t,x)\d x.
$$
Since $\chi(x)=1$ for $|x|\leq \frac{1}{2}$ we get, for $t>0$ large enough,
\begin{eqnarray}
\mathcal E^{\rm r}_{\sqrt t,\theta(t)}\left(z(t,\cdot)\right)&\leq& -\frac {\Theta ^{3}} 6\int_{-\frac{\sqrt t}{2}}^{\frac{\sqrt t}{2}}w^3(t,x)\d x+\frac{\theta(t)}{2}\Theta ^{2}\int_{-\sqrt t}^{\sqrt t}w^{2}(t,x)\d x\nonumber \\
&=& -\frac {\Theta ^{3}} 6\int_{-\frac{\sqrt t}{2}}^{\frac{\sqrt t}{2}}w^3(t,x)\d x+\theta(t) O(t^{-\frac 12}),
\label{truc}
\end{eqnarray}
since $\Vert w(t,\cdot)\Vert _\infty =O(t^{-\frac 12})$. Now recall that the function $w=w(t,x)$, the solution of the heat equation, becomes asymptotically self-similar in the sense that, for any $1\leq p\leq +\infty$, one has (see for instance  the monograph of Giga, Giga and Saal \cite[subsection 1.1.5]{Giga})
\begin{equation}\label{bidule}
\|w(t,\cdot)-\alpha\Gamma(t,\cdot)\|_{L^p(\R)}=o\left(t^{-\frac{1}{2}\left(1-\frac{1}{p}\right)}\right)\;\text{ as $t\to+\infty$},
\end{equation} 
wherein we have set $\alpha=\int_{\R} w(0,x)\d x>0$. Also, for any $1\leq p\leq +\infty$, there is $c_p>0$ such that
\begin{equation}\label{bidule2}
\left\|\Gamma(t,\cdot)\right\|_{L^p(\R)}=c_p t^{-\frac{1}{2}\left(1-\frac{1}{p}\right)}.
\end{equation}
Hence, denoting $\Vert \cdot\Vert _p=\Vert \cdot \Vert _{L^p(\R)}$, we have, as $t\to +\infty$,
\begin{eqnarray}
\int_{-\frac{\sqrt t}{2}}^{\frac{\sqrt t}{2}}w^3(t,x)\d x&=&O\Big(\Vert w(t,\cdot)-\alpha \Gamma(t,\cdot) \Vert _{3}^{3}+\Vert w(t,\cdot)-\alpha \Gamma(t,\cdot) \Vert _{2}^{2}\Vert \Gamma(t,\cdot) \Vert _{\infty}\nonumber \\
&&+\Vert w(t,\cdot)-\alpha \Gamma(t,\cdot) \Vert _{1}\Vert \Gamma(t,\cdot) \Vert _{\infty}^{2}\Big)+\alpha^{3}\int_{-\frac{\sqrt t}{2}}^{\frac{\sqrt t}{2}}\Gamma ^3(t,x)\d x\nonumber \\
&=&o\left(\frac 1 t\right)+\frac 1 t \left(\frac{\alpha}{\sqrt{4\pi}}\right)^{3}\int_{-\frac 12}^{\frac 12}e^{-\frac 34 y^{2}}\d y.\label{norme3}
\end{eqnarray}
Hence, in view of \eqref{truc} and \eqref{norme3}, we get the existence of some constant $C>0$ such that, for $t>0$ large enough,
\begin{equation}\label{esti2}
\mathcal E^{\rm r}_{\sqrt t,\theta(t)}\left(z(t,\cdot)\right)\leq -\frac{C}{t}+C\theta(t) t^{-\frac{1}{2}}.
\end{equation}

From \eqref{esti1} and \eqref{esti2}, we have, up to enlarging $C>0$, for $t>0$ large enough,
$$
t\mathcal E_{\sqrt t,\theta(t)}(z(t,\cdot))\leq  -C +\frac{C}{\sqrt t}+C\theta(t)t^{\frac 12}.
$$
From Assumption \ref{ASS-threshold} we know that $\liminf _{t\to +\infty}\theta(t)t^{\frac 12}=0$ and thus 
\begin{equation*}
\liminf _{t\to+\infty} t\mathcal E_{\sqrt t,\theta(t)}\left(z(t,\cdot)\right)<0,
\end{equation*}
which completes the proof of Claim \ref{claim1}.
\end{proof}

\noindent{\bf Second step: $u$ does converge to $1$ as $t\to+\infty$.}  We now complete the proof of Theorem \ref{theorem1} by showing that
$\lim_{t\to+\infty} u(t,x)=1$ locally uniformly for $x\in\R$.

To that aim let us fix  $0<\theta_0<\frac 12$, so that $\int_0^1 f(\theta_0,u)\d u>0$. It is known  \cite[Theorem 3.2]{Fife-Leod} that we can find $L>0$ large enough so that  the solution $v=v(t,x)$ of the initial value problem
\begin{equation}\label{fife}
\begin{cases}
\displaystyle \frac{\partial v}{\partial t}=\frac{\partial^2 v}{\partial x^2}+f\left(\theta_0,v\right),&t>0,\;x\in\R,\vspace{5pt}\\
\displaystyle v(0,x)=\frac{1+\theta_0}{2}\mathbf 1 _{(-L,L)}(x),&x\in\R,
\end{cases}
\end{equation} 
satisfies $v(t,x)\to 1$ as $t\to+\infty$, locally uniformly for $x\in\R$.

Next, according to the first step, there exists $x_0\in\R$ and a sequence $\{t_n\}_{n\geq 0}$ going to $+\infty$ as $n\to+\infty$ such that
\begin{equation*}
\liminf_{n\to+\infty} u(t_n,x_0)>0.
\end{equation*}
Consider the sequence of functions $u_n(t,x):=u(t+t_n,x)$. Because of parabolic regularity, one may assume that $u_n(t,x)\to u_\infty(t,x)$ locally uniformly for $(t,x)\in\R^2$ as $n\to+\infty$ and, since $\theta(t)\to 0$ as $t\to+\infty$, the function $u_\infty$ has to be an entire solution of the following monostable problem
\begin{equation*}
\frac{\partial u_\infty}{\partial t}=\frac{\partial^2 u_\infty}{\partial x^2}+u_\infty^2 \left(1-u_\infty\right),\;(t,x)\in\R^2,
\end{equation*} 
together with $u_\infty(0,x_0)>0$. According to the Hair Trigger Effect result of Aronson and Weinberger  \cite{Aronson-Weinberger}, one knows that $u_\infty(t,x)\to 1$ as $t\to+\infty$, locally uniformly for $x\in\R$.  Therefore, there exists $T>0$ large enough such that
\begin{equation*}
u_\infty(T,x)\geq \frac{3+\theta_0}{4},\;\;\forall x\in [-L,L].
\end{equation*} 
As a result, there is $n_0\geq 0$ large enough so that
\begin{equation*}
u(T+t_{n_0}, x)\geq \frac{1+\theta_0}{2}=v(0,x),\;\;\forall x\in [-L,L].
\end{equation*} 
Also, up to enlarging $n_0$, we have $\theta(t)\leq \theta_0$, so that  $f(\theta(t),\cdot)\geq f(\theta _0,\cdot)$, for all $t\geq t_{n_0}$. The parabolic comparison principle yields
\begin{equation*}
v(t,x)\leq u(T+t_{n_0}+t, x),\;\forall t\geq 0,\;x\in\R.
\end{equation*}
Because of the choice of the function $v$ and since $u\leq 1$, one concludes that
\begin{equation*}
\lim_{t\to+\infty} u(t,x)=1\text{ locally uniformly for $x\in\R$}.
\end{equation*}
This completes the proof of Theorem \ref{theorem1}.
\end{proof}

\section{Evolutionary rescue}\label{s:rescue}

In this section, we first prove the existence of a solution for the Cauchy problem \eqref{pb}--\eqref{pb1}, namely Theorem \ref{theo-existence}, and
then prove the evolutionary rescue phenomenon, namely Theorem \ref{theo-asymp}.

\subsection{Existence of a solution}

\begin{proof}[Proof of Theorem \ref{theo-existence}] We fix an initial data $(u_0(x),a_0)$ as in \eqref{pb1}. Let us observe that the main difficulty arises due to the gene flow term, more precisely the term $\frac{\partial \ln (u)}{\partial x}=\frac{1}{u}\frac{\partial u}{\partial x}$ in the $a-$equation. Indeed, despite the solution of the $u-$equation becomes immediately strictly positive for $t>0$, this term may become singular as $t\to 0$ in particular when $u_0$ vanishes at some points. To overcome this we make use of a regularisation procedure and we consider  \eqref{pb} with a sequence of positive initial data $(u_0^n(x),a_0)$. Next we pass to the limit $n\to+\infty$ to recover a solution of \eqref{pb}--\eqref{pb1} and complete the proof of Theorem \ref{theo-existence}. Due to the aforementioned singular term in the $a-$equation, the main difficulty in this proof consists in dealing with the initial data for $a$. Let us make this sketch precise. 

For $n\geq 1$ we define
\begin{equation*}
u_0^n(x):=\max\left(u_0(x),\frac{1}{n}\right).
\end{equation*}
Since $u_0^{n}\geq \frac 1 n$, we are equipped with  $(u^n,a^n)=(u^n(t,x),a^n(t,x))$ a classical solution of \eqref{pb} starting from initial data $(u_0^n(x),a_0)$. Let us first notice that $0<u^n(t,x)\leq 1$ and $0<a^{n}(t,x)\leq a_0$ for all $t\geq 0$, $x\in \R$. Also, the comparison principle ensures the following lower bound for $u^n$:
\begin{equation*}
u^n(t,x)\geq e^{-a_0 t} U_n(t,x),\;t\geq 0,\,x\in\R,\;n\geq 1,
\end{equation*}
wherein $U_n$ denotes the solution of the following heat equation
\begin{equation*}
\frac{\partial U_n}{\partial t}=\frac{\partial^2 U_n}{\partial x^2},\;t>0,\;x\in\R;\;\; U_n(0,\cdot)=u_0^n.
\end{equation*}
Note that $U_n(t,\cdot)=\Gamma(t,\cdot)\ast u_0^n$, where $\Gamma$ denotes the heat kernel. Since $u_0^n\to u_0$ in $L^\infty(\R)$, we have 
$U_n(t,\cdot)\to U_\infty(t,\cdot):=\Gamma(t,\cdot)*u_0$ in $L^\infty(\R)$ uniformly for $t>0$. Here since $u_0\not\equiv 0$, one has $U_\infty(t,x)>0$ for all $t>0$ and $x\in\R$. As a result, on any compact set $K$ of $(0,+\infty)\times \R$, there is $C_K>0$ such that, for all $n\geq 1$, all $(t,x) \in K$,  $u^n(t,x)\geq C_K$.

From standard parabolic estimates, the sequence $\{u^n\}$ is relatively compact in  $C_{\rm loc}\left([0,+\infty)\times\R\right)$ and in $C^{1+\frac \alpha 2, 2+\alpha}_{\rm loc}\left((0,+\infty)\times \R\right)$ for any $\alpha\in (0,1)$.
Next, because of the above lower bound for $u^n$, $\frac1{u_n}$ is uniformly bounded on each compact set of $(0,+\infty)\times \R$. As a consequence of standard parabolic estimates for the $a^n-$equation, the sequence $\{a^n\}$ is also relatively compact in  $C^{1+\frac \alpha 2, 2+\alpha}_{\rm loc}\left((0,+\infty)\times \R\right)$ for any $\alpha\in (0,1)$. Therefore one has, possibly along a subsequence, $u^n\to u$ as $n\to+\infty$ for the topologies of $C_{\rm loc}\left([0,+\infty)\times\R\right)$ and $C^{1+\frac \alpha 2, 2+\alpha}_{\rm loc}\left((0,+\infty)\times \R\right)$, while $a^n\to a$ as $n\to+\infty$ for the topology of $C^{1+\frac \alpha 2, 2+\alpha}_{\rm loc}\left((0,+\infty)\times \R\right)$. Furthermore the limit functions $u$ and $a$ satisfy the following properties
\begin{equation*}
\begin{split}
&e^{-a_0 t}U_\infty(t,x)\leq u(t,x)\leq 1,\;\;0\leq a(t,x)\leq a_0,\\
&u(t,x)\to u_0(x)\text{ locally uniformly for $x\in\R$ as $t\to 0$},\\
&\dfrac{\partial u}{\partial t}=\dfrac{\partial^2 u}{\partial x^2}+u\left(u-a^2\right)(1-u),\;t>0,\;x\in\R,\\
&\displaystyle \frac{\partial a}{\partial t}=\frac{\partial^2 a}{\partial x^2}+2\frac{\partial a}{\partial x}\frac{\partial \ln(u)}{\partial x}-\varepsilon (1-u)a,\;t>0,\;x\in\R.\\
\end{split}
\end{equation*}

To complete the proof of our existence result, it remains to show that $\Vert a(t,\cdot)-a_0\Vert _\infty =O(t)$ as $t\to 0$. To that aim, we consider the auxiliary sequence of functions $v^n(t,x):=a^n(t,x)u^n(t,x)$. Observe that $v^n$ satisfies
\begin{equation*}
\begin{cases}
\dfrac{\partial v^n}{\partial t}=\dfrac{\partial^2 v^n}{\partial x^2}+v^n G_n(t,x),&\;t>0,\;x\in\R,\vspace{5pt}\\
v^n(0,x)=a_0 u_0^n(x),&\; x\in \R,
\end{cases}
\end{equation*}
wherein we have set 
\begin{equation*}
G_n(t,x)=-\varepsilon \left(1-u^n(t,x)\right)+\left(u^n(t,x)-a^n(t,x)^2\right)\left(1-u^n(t,x)\right).
\end{equation*}
Since $u^n$ and $a^n$ are uniformly bounded, there exists $M>0$ such that
\begin{equation*}
|G_n(t,x)|\leq M,\;\forall (t,x)\in [0,+\infty)\times\R,\;\forall n\geq 1.
\end{equation*}
As a consequence of the parabolic comparison principle, one obtains 
\begin{equation*}
a_0 U_n(t,x) e^{-M t}\leq v^n(t,x)\leq a_0 U_n(t,x) e^{Mt},\;\forall (t,x)\in [0,+\infty)\times\R,\;\forall n\geq 1.
\end{equation*}
Similarly, there  exists some constant $N>0$ such that
\begin{equation*}
U_n(t,x) e^{-N t}\leq u^n(t,x)\leq U_n(t,x) e^{Nt},\;\forall (t,x)\in [0,+\infty)\times\R,\;\forall n\geq 1.
\end{equation*}
Since $a^n=\frac{v^n}{u^n}$, we infer from the two above estimates that
\begin{equation}\label{ici}
a_0 e^{-(M+N)t}\leq a^n(t,x)\leq a_0 e^{(M+N)t},\;\forall (t,x)\in [0,+\infty)\times\R,\;\forall n\geq 1.
\end{equation}
Passing to the limit $n\to +\infty$ implies that the function $a$ satisfies
\begin{equation*}
a_0 e^{-(M+N)t}\leq a(t,x)\leq a_0 e^{(M+N)t},\;\forall (t,x)\in (0,+\infty)\times\R,
\end{equation*}
which is enough to complete the proof of the theorem.
\end{proof}

\begin{remark} If the initial data $a_0(x)$ is non constant and we try to reproduce the above argument, then \eqref{ici} is replaced by
$$
\frac{V_n(t,x)}{U_n(t,x)}e^{-(M+N)t}\leq a^n(t,x)\leq \frac{V_n(t,x)}{U_n(t,x)}e^{(M+N)t},
$$
where $V_n(t,x)$ solves the heat equation starting from $a_0(x)u_0^n(x)$. After letting $n\to+\infty$, we get
$$
\frac{\left(\Gamma(t,\cdot)*(a_0u_0)\right)(x)}{\left(\Gamma(t,\cdot)*u_0\right)(x)}e^{-(M+N)t}\leq a(t,x)\leq \frac{\left(\Gamma(t,\cdot)*(a_0u_0)\right)(x)}{\left(\Gamma(t,\cdot)*u_0\right)(x)}e^{(M+N)t},
$$
for all $(t,x)\in(0,+\infty)\times \R$, and this is not clear that one recovers the initial data $a_0(x)$ for $a(t,x)$, in particular in the points where $u_0$ vanishes.
\end{remark}

\subsection{Evolutionary rescue result}

\begin{proof}[Proof of Theorem \ref{theo-asymp}] The proof is an application of Theorem \ref{theorem1}.
We fix $u_0\in C(\R)\setminus\{0\}$ with $0\leq u_0\leq 1$ and $a_0\in (0,1)$. Let $(u,a)=(u(t,x),a(t,x))$ be a solution of \eqref{pb}, with initial data $(u_0(x),a_0)$, in the sense of Definition \ref{def-sol}.

Let us first show that $u$ satisfies the weak persistence property, namely
\begin{equation}\label{1}
\limsup_{t\to+\infty}\sup_{x\in\R} u(t,x)=1.
\end{equation}
To that aim we argue by contradiction by assuming that there exist $t_0>0$ and $\alpha\in (0,1)$ such that
\begin{equation*}
u(t,x)\leq \alpha,\;\forall t\geq t_0,\;x\in\R.
\end{equation*}
Hence, the $a-$equation  yields
$$
\frac{\partial a}{\partial t}\leq \frac{\partial^2 a}{\partial x^2}+2\frac{\partial a}{\partial x}\frac{\partial \ln(u)}{\partial x}-\varepsilon (1-\alpha)a,\;\; \forall t\geq t_0,\;x\in\R,
$$
and we deduce from the comparison principle that
\begin{equation*}
a(t,x)\leq a_0 e^{-\varepsilon (1-\alpha)(t-t_0)},\;\;\forall t\geq t_0,\;x\in\R.
\end{equation*}
Hence setting $\theta(t):= a_0^2 e^{-2\varepsilon (1-\alpha)(t-t_0)}$, one obtains from the comparison principle applied to the $u-$equation that
\begin{equation*}
u(t,x)\geq \underline u(t,x),\;\forall t\geq t_0,\;x\in\R,
\end{equation*}
with $\underline u=\underline u(t,x)$ the solution of the Cauchy problem
\begin{equation*}
\begin{cases}
\displaystyle \frac{\partial \underline u}{\partial t}=\frac{\partial^2 \underline u}{\partial x^2}+f(\theta(t),\underline u),&\;t\geq t_0,\;x\in\R,\vspace{5pt}\\
\displaystyle \underline u(t_0,x)=u(t_0,x) &\;x\in\R.
\end{cases}
\end{equation*}
The function $\theta$ decays exponentially to $0$ and clearly satisfies Assumption \ref{ASS-threshold}. Moreover 
$u(t_0,\cdot)\in C(\R) \setminus \{0\}$ and $0\leq u(t_0,\cdot)\leq 1$, so that  Theorem \ref{theorem1} applies for the above equation and ensures that $\underline u(t,x)\to 1$ locally uniformly for $x\in\R$ as $t\to+\infty$. This contradicts $1>\alpha\geq u\geq\underline u$ for large times and \eqref{1} follows.

The proof of the second statement in Theorem \ref{theorem1} is similar to the one of the second step in the proof of Theorem \ref{theorem1}. Since $ a_0^{2}<\frac 12$  we can find $L>0$ large enough so that, for any point $x^{*}\in\R$,  the solution $v=v(t,x;x^*)$ of the initial value problem 
\begin{equation}\label{fife2}
\begin{cases}
\displaystyle \frac{\partial v}{\partial t}=\frac{\partial^2 v}{\partial x^2}+f\left(a_0^{2},v\right),&t>0,\;x\in\R,\vspace{5pt}\\
\displaystyle v(0,x;x^*)=\frac{1+a_0^2}{2}\mathbf 1 _{(x^{*}-L,x^{*}+L)}(x),&x\in\R,
\end{cases}
\end{equation} 
satisfies $v(t,x)\to 1$ as $t\to+\infty$, locally uniformly for $x\in\R$. From \eqref{1}, there are $t_n\to +\infty$ and $x_n\in \R$ such that  $u(t_n,x_n)\to 1$. 
Consider the sequences of functions
$$
u_n(t,x):=u(t+t_n,x+x_n),\; a_n(t,x):=a(t+t_n,x+x_n).
$$
From parabolic regularity and the uniform boundeness of $a_n$, we can let $n\to +\infty$ and get an entire (weak) solution of 
\begin{equation*}
\frac{\partial u_\infty}{\partial t}=\frac{\partial^2 u_\infty}{\partial x^2}+u_\infty(u_\infty-a_\infty(t,x)) \left(1-u_\infty\right),\;(t,x)\in\R^2,
\end{equation*} 
together with $u_\infty(0,0)=1$ and $0\leq u_\infty \leq 1$. This enforces $u_\infty \equiv 1$. Hence $u_n(t,x)\to 1$ locally uniformly for $t\geq 0$, $x\in \R$. Therefore, there is $n_0\geq 0$  large enough so that
\begin{equation*}
u(t_{n_0},\cdot)\geq \frac{1+a_0^2}{2}\mathbf 1 _{(x_{n_0}-L,x_{n_0}+L)}=v(0,x;x_{n_0}).
\end{equation*} 
Since  $f(a^2(t,x),\cdot)\geq f(a_0^2,\cdot)$, the parabolic comparison principle yields
\begin{equation*}
v(t,x;x_{n_0})\leq u(t_{n_0}+t, x),\;\forall t\geq 0,\;x\in\R.
\end{equation*}
Because of the choice of the function $v$ and since $u\leq 1$, one concludes that
$\lim_{t\to+\infty} u(t,x)=1$  locally uniformly for $x\in\R$.
This completes the proof of Theorem \ref{theo-asymp}.
\end{proof}

\medskip

\noindent {\bf Acknowledgement.} The first author is supported by the I-site MUSE, project MICHEL.

\end{document}